\begin{document}
\title[\hfilneg  \hfil ]{an invertibility criterion in a c*-algebra acting on the hardy space with applications to composition operators}

\author[u. g\"{u}l, b. koca]{u\u{g}ur g\"{u}l and beyaz ba\c{s}ak koca}

\address{u\u{g}ur g\"{u}l,  \newline
Hacettepe University, Department of Mathematics, 06800, Beytepe,
Ankara, TURKEY}
\email{\href{mailto:gulugur@gmail.com}{gulugur@gmail.com}}
\address{beyaz ba\c{s}ak koca,  \newline
Istanbul University, Department of Mathematics, 34134, Vezneciler,
Istanbul, TURKEY}
\email{\href{mailto:basakoca@istanbul.edu.tr}{basakoca@istanbul.edu.tr}}


\thanks{Submitted May 2, 2017}

\subjclass[2000]{47B33} \keywords{Composition Operators, Hardy
Spaces, Essential Spectra.}

\begin{abstract}
    In this paper we prove an invertibility criterion for certain
    operators which is given as a linear algebraic combination of
    Toeplitz operators and Fourier multipliers acting on the Hardy space of the unit disc.
    Very similar to the case of Toeplitz operators we prove that such operators
    are invertible if and only if they are Fredholm and their
    Fredholm index is zero. As an application we prove that for
    ``quasi-parabolic" composition operators the spectra and the
    essential spectra are equal.
\end{abstract}

\maketitle
\newtheorem{theorem}{Theorem}
\newtheorem{acknowledgement}[theorem]{Acknowledgement}
\newtheorem{algorithm}[theorem]{Algorithm}
\newtheorem{axiom}[theorem]{Axiom}
\newtheorem{case}[theorem]{Case}
\newtheorem{claim}[theorem]{Claim}
\newtheorem{conclusion}[theorem]{Conclusion}
\newtheorem{condition}[theorem]{Condition}
\newtheorem{conjecture}[theorem]{Conjecture}
\newtheorem{corollary}[theorem]{Corollary}
\newtheorem{criterion}[theorem]{Criterion}
\newtheorem{definition}[theorem]{Definition}
\newtheorem{example}[theorem]{Example}
\newtheorem{exercise}[theorem]{Exercise}
\newtheorem{lemma}[theorem]{Lemma}
\newtheorem{notation}[theorem]{Notation}
\newtheorem{problem}[theorem]{Problem}
\newtheorem{proposition}[theorem]{Proposition}
\newtheorem{remark}[theorem]{Remark}
\newtheorem{solution}[theorem]{Solution}
\newtheorem{summary}[theorem]{Summary}
\newtheorem*{thma}{Theorem A}
\newtheorem*{thmb}{Theorem B}
\newtheorem*{thmc}{Theorem C}
\newtheorem*{thmd}{Theorem D}
\newcommand{\norm}[1]{\left\Vert#1\right\Vert}
\newcommand{\abs}[1]{\left\vert#1\right\vert}
\newcommand{\set}[1]{\left\{#1\right\}}
\newcommand{\Real}{\mathbb R}
\newcommand{\eps}{\varepsilon}
\newcommand{\To}{\longrightarrow}
\newcommand{\BX}{\mathbf{B}(X)}
\newcommand{\A}{\mathcal{A}}

\section{introduction}
In this paper we investigate the invertibility of elements in the
C*-algebra $\Psi$ generated by Toeplitz operators and Fourier
multipliers. The C*-algebra $\Psi$ is defined to be
$$\Psi=\Psi(QC,C([0,\infty]))=C^{\ast}(\{T_{\varphi}:\varphi\in QC\}\cup\{D_{\vartheta}:\vartheta\in C([0,\infty])\})$$
the C*-algebra generated by Toeplitz operators with $QC$ symbols
and Fourier multipliers with continuous symbols. This C*-algebra
was introduced by the first author in \cite{Gul1} in order to
study the spectral properties of a class of composition operators.
In \cite{Gul1}, the first author showed that $\Psi/K(H^{2})$ is a
commutative C*-algebra with identity and determined its maximal
ideal space. The maximal ideal space $\mathbb{M}$ of
$\Psi/K(H^{2})$ is found to be homeomorphic to a certain subset of
$M(QC)\times [0,\infty]$ which can be described as
$$\mathbb{M}\cong (M_{\infty}(QC(\mathbb{R}))\times[0,\infty])\cup(M(QC(\mathbb{R}))\times\{\infty\})$$
where $M_{\infty}(QC)$ is the fiber of $M(QC)$ at infinity.

 In this paper we show that if $T=\sum
 T_{\varphi_{j}}D_{\vartheta_{j}}+\sum
D_{\nu_{j}}T_{\psi_{j}}\in\Psi$ is written as a finite
 sum or as an infinite sum converging in the operator norm where
 $\psi_{j}$, $\varphi_{j}\in QC$ and $\nu_{j}$, $\vartheta_{j}\in C([0,\infty])$ then
$T$ is invertible if and only if $T$ is Fredholm and has Fredholm
index zero. We do this through constructing a homotopy
$H:[0,1]\rightarrow\Psi$ which is defined as
$$H(w):=\sum T_{\varphi_{j}}D_{\vartheta_{j}^{w}}+\sum
D_{\nu_{j}^{w}}T_{\psi_{j}}$$ where $\nu_{j}^{w}(t):=\nu_{j}(t-\ln
w)$, $\vartheta_{j}^{w}(t):=\vartheta_{j}(t-\ln w)$ and
$H(0):=T_{\varphi}$ where
$\varphi:=\sum\lambda_{j}\varphi_{j}+\sum\mu_{j}\psi_{j}$ with
$\lambda_{j}:=\lim_{t\rightarrow\infty}\vartheta_{j}(t)$ and
$\mu_{j}=\lim_{t\rightarrow\infty}\nu_{j}(t)$. We observe that
this homotopy acts continuously on finite sums hence keeps on to
act continuously on infinite sums which converge in operator norm.
 We apply this result to show that the class of composition operators that the
 first author studied in \cite{Gul1} have spectra equal to their
 essential spectra. The class of composition operators that was
 studied in \cite{Gul1} is the class of composition operators with
 symbols
 $\varphi$ which have upper half-plane re-incarnation
\begin{equation*}
\mathfrak{C}^{-1}\circ\varphi\circ\mathfrak{C}(z)=z+\psi(z)
\end{equation*}
 for a bounded analytic function
  $\psi$ satisfying $\Im(\psi(z)) > \epsilon > 0$ for all $z\in$
  $\mathbb{H}$. We call this class of composition operators
  ``quasi-parabolic".

 \section{preliminaries}

In this section we fix the notation that we will use throughout
and recall some preliminary facts that will be used in the sequel.

Let $S$ be a compact Hausdorff topological space. The space of all
complex valued continuous functions on $S$ will be denoted by
$C(S)$. For any $f\in C(S)$, $\parallel f\parallel_{\infty}$ will
denote the sup-norm of $f$, i.e. $$\parallel
f\parallel_{\infty}=\sup\{\mid f(s)\mid:s\in S\}.$$ For a Banach
space $X$, $K(X)$ will denote the space of all compact operators
on $X$ and $\mathcal{B}(X)$  will denote the space of all bounded
linear operators on $X$. The open unit disc will be denoted by
$\mathbb{D}$, the open upper half-plane will be denoted by
$\mathbb{H}$, the real line will be denoted by $\mathbb{R}$ and
the complex plane will be denoted by $\mathbb{C}$. The one point
compactification of $\mathbb{R}$ will be denoted by
$\dot{\mathbb{R}}$ which is homeomorphic to $\mathbb{T}$. For any
$z\in$ $\mathbb{C}$, $\Re(z)$ will denote the real part, and
$\Im(z)$ will denote the imaginary part of $z$, respectively. For
any subset $S\subset$ $B(H)$, where $H$ is a Hilbert space, the
C*-algebra generated by $S$ will be denoted by $C^{*}(S)$. The
Cayley transform $\mathfrak{C}$ will be defined by
\begin{equation*}
\mathfrak{C}(z)=\frac{z-i}{z+i}.
\end{equation*}
 For any $a\in$ $L^{\infty}(\mathbb{R})$ (or $a\in$
$L^{\infty}(\mathbb{T})$), $M_{a}$ will be the multiplication
operator on $L^{2}(\mathbb{R})$ (or $L^{2}(\mathbb{T})$) defined
as
\begin{equation*}
M_{a}(f)(x)=a(x)f(x).
\end{equation*}

For convenience, we remind the reader of the rudiments of the
theory of Toeplitz operators and commutative C*-algebras.

Let $A$ be a commutative Banach algebra. Then its maximal ideal
space $M(A)$ is defined as
\begin{equation*}
    M(A)=\{x\in A^{*}:x(ab)=x(a)x(b)\quad\forall a,b\in A\}
\end{equation*}
where $A^{*}$ is the dual space of $A$. If $A$ has identity then
$M(A)$ is a compact Hausdorff topological space with the weak*
topology. The Gelfand transform $\Gamma:A\rightarrow C(M(A))$ is
defined as
\begin{equation*}
    \Gamma(a)(x)=x(a).
\end{equation*}
 If $A$ is a commutative C*-algebra with
identity, then $\Gamma$ is an isometric *-isomorphism between $A$
and $C(M(A))$. If $A$ is a C*-algebra and $I$ is a two-sided
closed ideal of $A$, then the quotient algebra $A/I$ is also a
C*-algebra (see \cite{Rudin} and \cite{Murphy}).

For a Banach algebra $A$, we denote by $com(A)$ the closed ideal
in $A$ generated by the commutators
$\{a_{1}a_{2}-a_{2}a_{1}:a_{1},a_{2}\in A\}$. It is an algebraic
fact that the quotient algebra $A/com(A)$ is a commutative Banach
algebra. For $a\in A$ the spectrum $\sigma_{A}(a)$ of $a$ on $A$
is defined as
\begin{equation*}
    \sigma_{A}(a)=\{\lambda\in\mathbb{C}:\lambda e-a\ \ \textrm{is not invertible in}\ A\},
\end{equation*}
where $e$ is the identity of $A$. In particular the spectrum
$\sigma(T)$ of a linear bounded operator $T:X\rightarrow X$ where
$X$ is a Banach space is defined as
$\sigma(T):=\sigma_{\mathcal{B}(X)}(T)$. Recall that a bounded
linear operator $T$ on a Hilbert space $H$ is called Fredholm if
the range of $T$ is closed, $\dim\ker(T)$ and $\dim\ker(T^{\ast})$
are finite. The Fredholm index $ind$ is defined as
$$ind(T)=\dim(\ker(T))-\dim(\ker(T^{\ast}))$$
It is a very well known fact that (\cite{Murphy}) when the set of
Fredholm operators $\mathrm{F}\subset B(H)$ is equipped with
operator norm topology and $\mathbb{Z}$ is equipped with discrete
topology, the index function $ind:\mathrm{F}\rightarrow\mathbb{Z}$
is continuous. The essential spectrum $\sigma_{e}(T)$ of an
operator $T$ acting on a Banach
  space $X$ is the spectrum of the coset of $T$ in the Calkin algebra
  $\mathcal{B}(X)/K(X)$, the algebra of bounded linear operators modulo
  compact operators.
The following Atkinson's characterization for Fredholm operators
is also well known:
\begin{theorem}{\cite[p.28, Theorem 1.4.16]{Murphy}}\label{Atkinson}
 A bounded linear operator $T$ on a Hilbert space $H$ is Fredholm if and only if
 $T+ K(H)$ is invertible in the quotient algebra $\mathcal{B}(H)/K(H)$, where $K(H)$ is the algebra of all compact operators on $H$.
\end{theorem}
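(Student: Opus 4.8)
The plan is to prove both implications directly: in one direction I would manufacture a parametrix (an inverse modulo compacts) out of the Fredholm data, and in the other direction I would recover the three defining properties of a Fredholm operator from such a parametrix, the delicate point being the closedness of the range.

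First I would prove the forward implication. Assume $T$ is Fredholm, so $\ker T$ and $\ker T^{*}$ are finite dimensional and $\mathrm{ran}(T)$ is closed. Since the range is closed, we have the orthogonal decompositions $H = \ker T \oplus (\ker T)^{\perp}$ and $H = \mathrm{ran}(T) \oplus \ker T^{*}$, the latter using $\overline{\mathrm{ran}(T)} = (\ker T^{*})^{\perp}$. The restriction $\widetilde{T} := T|_{(\ker T)^{\perp}} : (\ker T)^{\perp} \to \mathrm{ran}(T)$ is a continuous bijection between Hilbert spaces, hence has a bounded inverse by the open mapping theorem. I would then set $S := \widetilde{T}^{-1}P \in \mathcal{B}(H)$, where $P$ is the orthogonal projection onto $\mathrm{ran}(T)$ (so $S$ vanishes on $\ker T^{*}$). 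A direct computation shows $ST = P_{(\ker T)^{\perp}}$ and $TS = P_{\mathrm{ran}(T)}$, so that $I - ST$ is the projection onto $\ker T$ and $I - TS$ is the projection onto $\ker T^{*}$; both are finite rank and hence compact. Therefore $S + K(H)$ is a two-sided inverse of $T + K(H)$ in the Calkin algebra.

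Next I would prove the converse. Suppose $T + K(H)$ is invertible; then there exist $S \in \mathcal{B}(H)$ and compact operators $K_{1}, K_{2}$ with $ST = I + K_{1}$ and $TS = I + K_{2}$. For $x \in \ker T$ we get $0 = STx = x + K_{1}x$, so the identity on the closed subspace $\ker T$ coincides with $-K_{1}$ restricted there and is therefore compact; since a Hilbert space on which the identity is compact must be finite dimensional, $\dim \ker T < \infty$. Taking adjoints in $TS = I + K_{2}$ gives $S^{*}T^{*} = I + K_{2}^{*}$, and the same argument applied to $y \in \ker T^{*}$ (where $y = -K_{2}^{*}y$) yields $\dim \ker T^{*} < \infty$.

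It remains to show that $\mathrm{ran}(T)$ is closed, which I expect to be the one genuinely nontrivial step, since finite codimension of the range alone does not force closedness. The idea is to show that $T$ is bounded below on $(\ker T)^{\perp}$. Suppose not: there is a sequence $(x_{n})$ in $(\ker T)^{\perp}$ with $\|x_{n}\| = 1$ and $T x_{n} \to 0$. Applying $S$ gives $x_{n} + K_{1}x_{n} = STx_{n} \to 0$. As $(x_{n})$ is bounded and $K_{1}$ is compact, after passing to a subsequence $K_{1}x_{n}$ converges, and hence $x_{n} \to x$ with $\|x\| = 1$. Since $(\ker T)^{\perp}$ is closed, $x \in (\ker T)^{\perp}$, while continuity gives $Tx = \lim Tx_{n} = 0$, so $x \in \ker T$; this forces $x = 0$, contradicting $\|x\| = 1$. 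Hence $T$ is bounded below on $(\ker T)^{\perp}$, so $T\bigl((\ker T)^{\perp}\bigr) = \mathrm{ran}(T)$ is closed. Combining the three properties shows $T$ is Fredholm, completing the proof.
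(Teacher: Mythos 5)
Your proof is correct. Note that the paper itself offers no proof of this statement at all: it is quoted verbatim as a known result with a citation to Murphy's book (Theorem 1.4.16), so there is no internal argument to compare against; your write-up is essentially the standard textbook proof that the citation points to. Both directions are handled properly: the forward direction via the parametrix $S=\widetilde{T}^{-1}P$, whose products with $T$ differ from the identity by the finite-rank projections onto $\ker T$ and $\ker T^{*}$, and the converse via the Riesz-type argument that a subspace on which the identity is compact is finite dimensional, together with the bounded-below-on-$(\ker T)^{\perp}$ argument for closedness of the range. You were also right to flag the range-closedness step as the delicate one: finite dimensionality of $\ker T^{*}$ only controls the codimension of $\overline{\mathrm{ran}(T)}$, not of $\mathrm{ran}(T)$ itself, so the compactness argument with the sequence $(x_{n})$ is genuinely needed and you execute it correctly (the passage to a subsequence along which $K_{1}x_{n}$ converges, forcing $x_{n}\to x$ with $\|x\|=1$ and $x\in\ker T\cap(\ker T)^{\perp}=\{0\}$, is exactly the right contradiction).
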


For $1\leq p < \infty$ the Hardy space of the unit disc will be
denoted by $H^{p}(\mathbb{D})$ and the Hardy space of the upper
half-plane will be denoted by $H^{p}(\mathbb{H})$.

  The two Hardy spaces $H^{2}(\mathbb{D})$ and $H^{2}(\mathbb{H})$
    are isometrically isomorphic. An isometric isomorphism $\Phi:H^{2}(\mathbb{D})\longrightarrow$ $H^{2}(\mathbb{H})$
is given by
   \begin{equation}\label{nice1}
    \Phi(g)(z)=
   \bigg(\frac{1}{\sqrt{\pi}(z+i)}\bigg)g\bigg(\frac{z-i}{z+i}\bigg)
   \end{equation}
The mapping $\Phi$ has an inverse
$\Phi^{-1}:H^{2}(\mathbb{H})\longrightarrow$ $H^{2}(\mathbb{D})$
given by
\begin{equation*}\label{nice2}
\Phi^{-1}(f)(z)= \frac{e^\frac{i\pi}{2}(4\pi)^\frac{1}{2}}{(1-z)}
f\bigg(\frac{i(1+z)}{1-z}\bigg)
\end{equation*}

Using the isometric isomorphism $\Phi$, one may transfer Fatou's
theorem in the unit disc case to upper half-plane and may embed
   $H^{2}(\mathbb{H})$ in $L^{2}(\mathbb{R})$ via
   $f\longrightarrow$ $f^{*}$ where $f^{*}(x)=$
   $\lim_{y\rightarrow 0}f(x+iy)$. This embedding is an
   isometry.

Throughout the paper, using $\Phi$, we will go back and forth
between $H^{2}(\mathbb{D})$ and $H^{2}(\mathbb{H})$. We use the
property that $\Phi$ preserves spectra, compactness and essential
spectra i.e. if $T\in\mathcal{B}(H^{2}(\mathbb{D}))$ then
\begin{equation*}
    \sigma_{\mathcal{B}(H^{2}(\mathbb{D}))}(T)=\sigma_{\mathcal{B}(H^{2}(\mathbb{H}))}(\Phi\circ
    T\circ\Phi^{-1}),
\end{equation*}
$K\in K(H^{2}(\mathbb{D}))$ if and only if $\Phi\circ
K\circ\Phi^{-1} \in K(H^{2}(\mathbb{H}))$ and hence we have
\begin{equation}
    \sigma_{e}(T)=\sigma_{e}(\Phi\circ T\circ\Phi^{-1}).
\end{equation}
We also note that $T\in\mathcal{B}(H^{2}(\mathbb{D}))$ is
essentially normal if and only if $\Phi\circ
T\circ\Phi^{-1}\in\mathcal{B}(H^{2}(\mathbb{H}))$ is essentially
normal.

The Toeplitz operator with symbol $a$ is defined as
    $$T_{a}=P M_{a}|_{H^{2}} ,$$
where $P$ denotes the orthogonal projection of $L^{2}$ onto
$H^{2}$. A good reference about Toeplitz operators on $H^{2}$ is
Douglas' treatise (\cite{Douglas}). Although the Toeplitz
operators treated in \cite{Douglas} act on the
 Hardy space of the unit disc, the results can be transfered
 to the upper half-plane case using the isometric isomorphism $\Phi$
 introduced by equation (1). In the sequel the following identity
 will be used:
\begin{equation}
     \Phi^{-1}\circ T_{a}\circ\Phi=T_{a\circ \mathfrak{C}^{-1}} ,
\end{equation}
where $a\in L^{\infty}(\mathbb{R})$. We also employ the fact
\begin{equation}
    \parallel T_{a}\parallel_{e}=\parallel
    T_{a}\parallel=\parallel a\parallel_{\infty}
\end{equation}
 for any $a\in L^{\infty}(\mathbb{R})$, which is a consequence
 of Theorem 7.11 of \cite{Douglas} (pp. 160--161) and equation (3). For any subalgebra $A\subseteq L^{\infty}(\mathbb{R})$ the Toeplitz C*-algebra generated
 by symbols in $A$ is defined to be
\begin{equation*}
 \mathcal{T}(A)=C^{*}(\{T_{a}:a\in A\}).
\end{equation*}
 It is a well-known result of Sarason (see \cite{Sarason}) that the
 set of functions
\begin{equation*}
 H^{\infty}+C=\{f_{1}+f_{2}:f_{1}\in H^{\infty}(\mathbb{D}),f_{2}\in C(\mathbb{T})\}
\end{equation*}
 is a closed subalgebra of $L^{\infty}(\mathbb{T})$. The following theorem of
 Douglas \cite{Douglas} will be used in the sequel.
\begin{theorem} [\scshape Douglas' Theorem]
    Let $a$,$b\in$ $H^{\infty}+C$ then the semi-commutators
    $$T_{ab}-T_{a}T_{b}\in K(H^{2}(\mathbb{D})),\quad T_{ab}-T_{b}T_{a}\in K(H^{2}(\mathbb{D})), $$
    and hence the commutator
    $$[T_{a},T_{b}]=T_{a}T_{b}-T_{b}T_{a}\in K(H^{2}(\mathbb{D}))$$
    is compact. \label{thmDouglas}
\end{theorem}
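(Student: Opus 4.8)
The plan is to reduce both semi-commutators to products of Hankel operators and then establish compactness of the relevant Hankel operators by approximation. For $\varphi\in L^{\infty}(\mathbb{T})$ I would introduce the Hankel operator $H_{\varphi}:=(I-P)M_{\varphi}|_{H^{2}}$, which maps $H^{2}(\mathbb{D})$ into $(H^{2})^{\perp}$. Splitting $M_{b}h=P(bh)+(I-P)(bh)$ and using $T_{a}^{\ast}=T_{\bar a}$, a direct computation would give the identity
$$T_{ab}-T_{a}T_{b}=H_{\bar a}^{\ast}H_{b},$$
valid for all $a,b\in L^{\infty}(\mathbb{T})$: indeed $T_{a}T_{b}h=P(aP(bh))$ while $T_{ab}h=P(abh)=P(aP(bh))+P(a(I-P)(bh))$, and since $H_{\bar a}^{\ast}=PM_{a}(I-P)$ the leftover term $P(a(I-P)(bh))$ is exactly $H_{\bar a}^{\ast}H_{b}h$. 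Because the symbols commute pointwise, $T_{ba}=T_{ab}$, so the same identity with $a$ and $b$ exchanged yields $T_{ab}-T_{b}T_{a}=H_{\bar b}^{\ast}H_{a}$.

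The heart of the matter is to prove that $H_{c}$ is compact whenever $c\in H^{\infty}+C$. First, if $c\in H^{\infty}(\mathbb{D})$ then $M_{c}$ leaves $H^{2}$ invariant, so $(I-P)M_{c}|_{H^{2}}=0$ and $H_{c}=0$; by linearity of $\varphi\mapsto H_{\varphi}$ it then suffices to treat $c\in C(\mathbb{T})$. For a trigonometric polynomial $p$ of degree $N$ the operator $H_{p}$ has finite rank, since $H_{p}$ annihilates $e^{in\theta}$ for every $n\geq N$ (the product $pe^{in\theta}$ carries no negative frequencies once $n$ exceeds the degree of $p$), so only $1,e^{i\theta},\dots,e^{i(N-1)\theta}$ can be mapped to something nonzero. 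For a general continuous $c$ I would choose trigonometric polynomials $p_{n}\to c$ uniformly; from the trivial bound $\|H_{\varphi}\|\leq\|\varphi\|_{\infty}$ one gets $\|H_{c}-H_{p_{n}}\|\leq\|c-p_{n}\|_{\infty}\to 0$, exhibiting $H_{c}$ as a norm limit of finite-rank operators, hence compact.

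Combining these, $H_{a}$ and $H_{b}$ are compact because $a,b\in H^{\infty}+C$, whence $T_{ab}-T_{a}T_{b}=H_{\bar a}^{\ast}H_{b}$ and $T_{ab}-T_{b}T_{a}=H_{\bar b}^{\ast}H_{a}$ are each a bounded operator composed with a compact operator, and therefore compact; the commutator
$$[T_{a},T_{b}]=(T_{ab}-T_{b}T_{a})-(T_{ab}-T_{a}T_{b})$$
is then a difference of compacts and is compact. Note that one does not need $\bar a,\bar b\in H^{\infty}+C$: only $H_{a}$ and $H_{b}$ must be compact, while the factors $H_{\bar a}^{\ast},H_{\bar b}^{\ast}$ are merely bounded. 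I expect the only delicate points to be the bookkeeping in verifying the semi-commutator identity (in particular the adjoint formula $H_{\bar a}^{\ast}=PM_{a}(I-P)$) and the finite-rank claim for polynomial symbols; the remaining reduction to continuous symbols and the compactness conclusion are soft approximation arguments.
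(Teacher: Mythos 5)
Your proof is correct, but there is nothing in the paper to compare it against: the paper does not prove this statement at all, it quotes it as Douglas' Theorem with a citation to \cite{Douglas} and only uses its conclusion (compactness of the semi-commutators, hence of commutators, for $H^{\infty}+C$ and in particular for $QC$ symbols). Relative to the cited source, your route is genuinely different and more self-contained. The classical treatment in \cite{Douglas} establishes the continuous-symbol case first -- a direct computation shows the semi-commutator with symbol $e^{i\theta}$ has rank one, induction and linearity give trigonometric polynomial symbols, and uniform approximation gives all of $C(\mathbb{T})$ -- and then assembles the $H^{\infty}+C$ case using the exact multiplicativity of Toeplitz products when an analytic symbol sits on the appropriate side. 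You instead factor both semi-commutators through Hankel operators, $T_{ab}-T_{a}T_{b}=H_{\bar a}^{\ast}H_{b}$ and $T_{ab}-T_{b}T_{a}=H_{\bar b}^{\ast}H_{a}$, and prove the easy half of Hartman's theorem: $H_{c}$ is compact for every $c\in H^{\infty}+C$. All the individual steps check out: the adjoint formula $H_{\bar a}^{\ast}=PM_{a}(I-P)$, the vanishing $H_{c}=0$ for $c\in H^{\infty}$, the rank bound for a degree-$N$ trigonometric polynomial symbol, the estimate $\|H_{\varphi}\|\leq\|\varphi\|_{\infty}$ used to pass to uniform limits, and the use of $T_{ab}=T_{ba}$ to obtain the second factorization by symmetry. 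What your approach buys is a single compactness lemma from which both semi-commutators fall out at once, together with the correct and clarifying observation that no hypothesis on $\bar a$, $\bar b$ is needed because $H_{\bar a}^{\ast}$ and $H_{\bar b}^{\ast}$ enter only as bounded factors; what the classical approach buys is that it never leaves the Toeplitz algebra -- no Hankel operators or adjoint formulas are required, only rank-one computations and norm approximation.
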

Let $QC$ be the C*-algebra of functions in $H^{\infty}+C$ whose
complex conjugates also belong to $H^{\infty}+C$. Let us also
define the upper half-plane version of $QC$ as the following:
\begin{equation*}
    QC(\mathbb{R})=\{\varphi\in L^{\infty}(\mathbb{R}):\varphi\circ\mathfrak{C}^{-1}\in QC\}.
\end{equation*}
Going back and forth with Cayley transform one can deduce that
$QC(\mathbb{R})$ is a closed subalgebra of
$L^{\infty}(\mathbb{R})$.

By Douglas' theorem and equation (3), if $a$, $b\in
QC(\mathbb{R})$, then
\begin{equation*}
T_{a}T_{b}-T_{ab}\in K(H^{2}(\mathbb{H})).
\end{equation*}
Let $scom(QC(\mathbb{R}))$ be the closed ideal in
$\mathcal{T}(QC(\mathbb{R}))$ generated by the semi-commutators
$\{T_{a}T_{b}-T_{ab}:a, b\in QC(\mathbb{R})\}$. Then we have
\begin{equation*}
    com(\mathcal{T}(QC(\mathbb{R})))\subseteq
    scom(QC(\mathbb{R}))\subseteq K(H^{2}(\mathbb{H})) .
\end{equation*}
 By Proposition 7.12 of \cite{Douglas} and equation (3) we have
\begin{equation}
    com(\mathcal{T}(QC(\mathbb{R})))=scom(QC(\mathbb{R}))=K(H^{2}(\mathbb{H})) .
\end{equation}
 Now consider the symbol map
\begin{equation*}
 \Sigma:QC(\mathbb{R})\rightarrow\mathcal{T}(QC(\mathbb{R}))
\end{equation*}
  defined as
 $\Sigma(a)=T_{a}$. This map is linear but not necessarily multiplicative; however if we let $q$ be
 the quotient map
\begin{equation*}
    q:\mathcal{T}(QC(\mathbb{R})) \rightarrow \mathcal{T}(QC(\mathbb{R}))/scom(QC(\mathbb{R})) ,
\end{equation*}
then $q\circ\Sigma$ is multiplicative; moreover by equations (4)
and (5), we conclude that
 $q\circ\Sigma$ is an isometric *-isomorphism from $QC(\mathbb{R})$
 onto $\mathcal{T}(QC(\mathbb{R}))/K(H^{2}(\mathbb{H}))$. The
 maximal ideal space $M(QC(\mathbb{R}))$ is fibered over
 $\dot{\mathbb{R}}$ in the following way:
 For any $x\in$ $M(QC(\mathbb{R}))$ consider $\tilde{x}=$
    $x|_{C(\dot{\mathbb{R}})}$ then $\tilde{x}\in$
    $M(C(\dot{\mathbb{R}}))=$ $\dot{\mathbb{R}}$. Hence
    $M(QC(\mathbb{R}))$ is fibered over $\dot{\mathbb{R}}$, i.e.
\begin{equation*}
    M(QC(\mathbb{R}))=\bigcup_{t\in\dot{\mathbb{R}}}M_{t}(QC),
\end{equation*}
    where
\begin{equation*}
    M_{t}(QC)=\{x\in M(QC(\mathbb{R})):\tilde{x}=x|_{C(\dot{\mathbb{R}})}=\delta_{t},\delta_{t}(f)=f(t)\}.
\end{equation*}

 We also
 remind the reader about the very important fact in the theory of
 Toeplitz operators that any Toeplitz operator $T_{\varphi}$ with
 symbol $\varphi\in L^{\infty}$ is invertible if and only if
 $T_{\varphi}$ is Fredholm and the Fredholm index
 $ind(T_{\varphi})=0$ is zero. The proof of this fact can also be
 found in \cite{Douglas}. This fact will be used in the proof of
 our main result in this paper.

Let $\varphi:\mathbb{D}\longrightarrow$ $\mathbb{D}$ or
$\varphi:\mathbb{H}\longrightarrow$ $\mathbb{H}$ be a holomorphic
self-map of the unit disc or the upper half-plane. The
\emph{composition
 operator} $C_{\varphi}$ on $H^{p}(\mathbb{D})$ or $H^{p}(\mathbb{H})$ with symbol $\varphi$ is defined by
\begin{equation*}
C_{\varphi}(g)(z)= g(\varphi(z)),\qquad
z\in\mathbb{D}\quad\textrm{or}\quad z\in\mathbb{H}.
\end{equation*}
Composition operators of the unit disc are always
 bounded \cite{CoMac} whereas composition operators of the upper half-plane are not
 always bounded. For the boundedness problem of composition operators of the upper half-plane see
 \cite{Matache}.
The composition operator $C_{\varphi}$ on $H^{2}(\mathbb{D})$ is
carried over to
$(\frac{\tilde{\varphi}(z)+i}{z+i})C_{\tilde{\varphi}}$ on
$H^{2}(\mathbb{H})$ through $\Phi$, where $\tilde{\varphi}=$
$\mathfrak{C}\circ\varphi\circ\mathfrak{C}^{-1}$, i.e. we have
\begin{equation}
    \Phi C_{\varphi}\Phi^{-1} =
    T_{(\frac{\tilde{\varphi}(z)+i}{z+i})}C_{\tilde{\varphi}}.
\end{equation}

However this gives us the boundedness of
 $C_{\varphi}:H^{2}(\mathbb{H})$ $\rightarrow$ $H^{2}(\mathbb{H})$ for
\begin{equation*}
    \varphi(z)=pz+\psi(z) ,
\end{equation*}
 where $p > 0$, $\psi\in$ $H^{\infty}$ and $\Im(\psi(z))>\epsilon>0$ for all $z\in\mathbb{H}$:

Let $\tilde{\varphi}:\mathbb{D}\rightarrow$ $\mathbb{D}$ be an
analytic self-map of $\mathbb{D}$ such that $\varphi=$
$\mathfrak{C}^{-1}\circ\tilde{\varphi}\circ\mathfrak{C}$, then we
have
\begin{equation*}
    \Phi C_{\tilde{\varphi}}\Phi^{-1}=T_{\tau}C_{\varphi}
\end{equation*}
where
\begin{equation*}
    \tau(z)=\frac{\varphi(z)+i}{z+i}.
\end{equation*}
If
\begin{equation*}
    \varphi(z)=pz+\psi(z)
\end{equation*}
with $p > 0$, $\psi\in H^{\infty}$ and $\Im(\psi(z)) > \epsilon>
0$, then $T_{\frac{1}{\tau}}$ is a bounded operator. Since $\Phi
C_{\tilde{\varphi}}\Phi^{-1}$  is always bounded we conclude that
$C_{\varphi}$ is bounded on $H^{2}(\mathbb{H})$.

 The Fourier transform $\mathcal{F}f$ of $f\in$
$\mathcal{S}(\mathbb{R})$ (the Schwartz space, for a definition
see \cite{Rudin}) is defined by
\begin{equation*}
(\mathcal{F}f)(t)=\frac{1}{\sqrt{2\pi}}\int_{-\infty}^{+\infty}e^{-itx}f(x)dx.
\end{equation*}
The Fourier transform extends to an invertible isometry from
$L^{2}(\mathbb{R})$ onto itself with inverse
\begin{equation*}
(\mathcal{F}^{-1}f)(t)=\frac{1}{\sqrt{2\pi}}\int_{-\infty}^{+\infty}
e^{itx}f(x)dx.
\end{equation*}
The following is a consequence of a theorem due to Paley and
Wiener \cite{Rudin}. Let $1 < p < \infty$. For $f\in$
$L^{p}(\mathbb{R})$, the following assertions are equivalent:
\begin{enumerate}
    \item[($i$)]  $f\in$ $H^{p}$,
    \item[($ii$)] $\textrm{supp}(\hat{f})\subseteq$ $[0,\infty)$
\end{enumerate}

A reformulation of the Paley-Wiener theorem says that the image of
$H^{2}(\mathbb{H})$ under the Fourier transform is
$L^{2}((0,\infty))$.

 By the Paley-Wiener theorem we observe that the operator
$$D_{\vartheta}=\mathcal{F}^{-1}M_{\vartheta}\mathcal{F}$$
for $\vartheta\in C([0,\infty])$ maps $H^{2}(\mathbb{H})$ into
itself, where $C([0,\infty])$ denotes the set of continuous
functions on $[0,\infty)$ which have limits at infinity. Since
$\mathcal{F}$ is unitary we also observe that
\begin{equation*}
    \parallel D_{\vartheta}\parallel=\parallel M_{\vartheta}\parallel=\parallel\vartheta\parallel_{\infty}
\end{equation*}
Let $F$ be defined as
\begin{equation*}
F =\{D_{\vartheta}\in B(H^{2}(\mathbb{H})):\vartheta\in
C([0,\infty])\} .
\end{equation*}
We observe that $F$ is a commutative C*-algebra with identity and
the map $D:C([0,\infty])\rightarrow F$ given by
\begin{equation*}
D(\vartheta)=D_{\vartheta}
\end{equation*}
is an isometric *-isomorphism by equation above. Hence $F$ is
isometrically *-isomorphic to $C([0,\infty])$. The operator
$D_{\vartheta}$ is usually called a ``Fourier Multiplier.'' We
will also need the fact that, under the Fourier transform the
Beurling type invariant subspace $e^{i\eta x}H^{2}(\mathbb{H})$,
for $\eta>0$, is mapped onto $L_{\eta}^{2}((0,\infty)):=\{f\in
L^{2}(0,\infty):f(t)=0\quad\textrm{for a.e.}\quad t\in (0,\eta)\}$
for all $\eta>0$ and $\mathcal{F}T_{e^{i\eta
x}}\mathcal{F}^{-1}=S_{\eta}$ where
$S_{\eta}:L^{2}((0,\infty))\rightarrow L^{2}((0,\infty))$ is
defined as $S_{\eta}f(t):=f(t-\eta)$ if $t\geq\eta$ and
$S_{\eta}f(t)=0$ if $0\leq t<\eta$. Similarly we have
$\mathcal{F}T_{e^{-i\eta x}}\mathcal{F}^{-1}=S_{\eta}^{\ast}$
where $S_{\eta}^{\ast}f(t)=f(t+\eta)$. Here $T_{e^{i\eta x}}$ and
$T_{e^{-i\eta x}}$ are Toeplitz operators with symbols $e^{i\eta
z}$ and $e^{-i\eta z}$ respectively. Since
$\cup_{\eta>0}L_{\eta}^{2}((0,\infty))$ is dense in
$L^{2}((0,\infty))$, $\cup_{\eta>0}e^{i\eta x}H^{2}(\mathbb{H})$
is also dense in $H^{2}(\mathbb{H})$.

In \cite{Gul1} the first author studied the C*-algebra $\Psi$
generated by Toeplitz operators with $QC$ symbols and Fourier
multipliers with continuous symbols. He proved that the commutator
$[T_{\varphi},D_{\vartheta}]=T_{\varphi}D_{\vartheta}-D_{\vartheta}T_{\varphi}\in
K(H^{2})$ of any Toeplitz operator with $QC$ symbol and a Fourier
multiplier with continuous symbol is compact which implies that
the Calkin algebra $\Psi/K(H^{2})$ is a commutative C*-algebra
with identity. The maximal ideal space $\mathbb{M}$ of
$\Psi/K(H^{2})$ is also studied in \cite{Gul1} and is found to be
$$\mathbb{M}\cong (M_{\infty}(QC(\mathbb{R}))\times[0,\infty])\cup(M(QC(\mathbb{R}))\times\{\infty\})\subset M(QC)\times [0,\infty]$$
where $M_{\infty}(QC):=\{x\in
M(QC):x|_{C(\dot{\mathbb{R}})}=\delta_{\infty},\delta_{\infty}(f)=\lim_{t\rightarrow\infty}f(t)\}$
is the fiber of $M(QC)$ at infinity. The Gelfand transform
$\Gamma$ of $\Psi/K(H^{2})$ looks like
\begin{equation*}
\Gamma\left(\left[\sum
T_{\varphi_{j}}D_{\vartheta_{j}}\right]\right)(x,t)=
\begin{cases}
\sum\hat{\varphi_{j}}(x)\hat{\vartheta_{j}}(t)\quad\textrm{if}\quad
x\in
M_{\infty}(QC(\mathbb{R})) \\
\sum\hat{\varphi_{j}}(x)\hat{\vartheta_{j}}(\infty)\quad\textrm{if}\quad
t=\infty
\end{cases}
\end{equation*}

\section{the main result}

In this section we prove the main result of this paper which
asserts that any sum $T=\sum T_{\varphi_{j}}D_{\vartheta_{j}}+\sum
D_{\nu_{j}}T_{\psi_{j}}\in\Psi$ convergent in the operator norm is
invertible if and only if $T$ is Fredholm and has Fredholm index
zero. This may be regarded as a generalization of the fact that
any Toeplitz operator $T_{\varphi}$ with a bounded symbol
$\varphi\in L^{\infty}$ is invertible if and only if $T_{\varphi}$
is Fredholm and has Fredholm index zero. In proving this fact our
main technical tool will be a homotopy $H:[0,1]\rightarrow\Psi$
which carries $T$ to a Toeplitz operator. The homotopy $H$ for
$T=\sum T_{\varphi_{j}}D_{\vartheta_{j}}+\sum
D_{\nu_{j}}T_{\psi_{j}}$ is defined as follows:
$$H(w):=\sum T_{\varphi_{j}}D_{\vartheta_{j}^{w}}+\sum
D_{\nu_{j}^{w}}T_{\psi_{j}}$$ where $w\in (0,1]$,
$\vartheta_{j}^{w}(t)=\vartheta_{j}(t-\ln w)$,
$\nu_{j}^{w}(t)=\nu_{j}(t-\ln w)$ and for $w=0$,
$H(0):=T_{\varphi}$,
$\varphi=\sum\lambda_{j}\varphi_{j}+\sum\mu_{j}\psi_{j}$ and
$\lambda_{j}=\lim_{t\rightarrow\infty}\vartheta_{j}(t)$,
$\mu_{j}=\lim_{t\rightarrow\infty}\nu_{j}(t)$. It is easily seen
that when $T=\sum T_{\varphi_{j}}D_{\vartheta_{j}}+\sum
D_{\nu_{j}}T_{\psi_{j}}$ consists of finite sums, $H$ is
continuous. And thus $H$ keeps on being continuous when $T$
consists of infinite sums which are both convergent in operator
norm. In the proof of this fact we will always work with finite
sums since it is enough to prove it for finite sums. In this
section, unless otherwise stated, $H^{2}$ will always be
understood as $H^{2}(\mathbb{H})$ and $QC$ will always be
understood as $QC(\mathbb{R})$.

Here is our main theorem:

\begin{theorem}
Let $T=\sum T_{\varphi_{j}}D_{\vartheta_{j}}+\sum
D_{\nu_{j}}T_{\psi_{j}}\in\Psi$
be such that $\nu_{j}$, $\vartheta_{j}\in C([0,\infty])$, $\psi_{j}$, $\varphi_{j}\in QC$. Then\\
$T\in\Psi$ is invertible $\Leftrightarrow$ $T$ is Fredholm and
$ind(T)=0$
\end{theorem}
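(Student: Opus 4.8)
The plan is to prove the nontrivial implication ($\Leftarrow$); the forward implication is immediate, since any invertible operator is Fredholm with $\dim\ker T=\dim\ker T^{*}=0$, hence $ind(T)=0$. So I assume $T$ is Fredholm with $ind(T)=0$ and use the homotopy $H$ to reduce the invertibility of $T=H(1)$ to that of the Toeplitz operator $T_{\varphi}=H(0)$.

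First I would check that every $H(w)$ is Fredholm. Passing to the commutative quotient $\Psi/K(H^{2})$ and its Gelfand transform $\Gamma$ over $\mathbb{M}$, on the fiber $M(QC)\times\{\infty\}$ one has $\Gamma(H(w))(x,\infty)=\sum\hat\varphi_{j}(x)\lambda_{j}+\sum\mu_{j}\hat\psi_{j}(x)=\hat\varphi(x)$, independently of $w$, while on $M_{\infty}(QC)\times[0,\infty]$ one has $\Gamma(H(w))(x,t)=\Gamma(T)(x,t-\ln w)$. Since the shift $t\mapsto t-\ln w$ merely restricts the second variable to the subinterval $[-\ln w,\infty]\subseteq[0,\infty]$, the range of values of $\Gamma(H(w))$ is contained in that of $\Gamma(T)=\Gamma(H(1))$; as $T$ is Fredholm, $\Gamma(T)$ is non-vanishing on $\mathbb{M}$, so $\Gamma(H(w))$ is non-vanishing for every $w\in(0,1]$, and at $w=0$ the symbol is $\hat\varphi$ on all of $\mathbb{M}$, which is non-vanishing because it equals the (already non-vanishing) value of $\Gamma(T)$ on the fiber at infinity. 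Hence $H(w)$ is Fredholm for every $w\in[0,1]$ and, in particular, $T_{\varphi}$ is Fredholm. Because $H$ is norm-continuous and $ind\colon\mathrm{F}\to\mathbb{Z}$ is continuous while $[0,1]$ is connected, $ind(H(w))$ is constant, so $ind(T_{\varphi})=ind(T)=0$; by the classical Toeplitz criterion recalled above, $T_{\varphi}$ is therefore invertible.

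It remains to transfer invertibility from $T_{\varphi}=H(0)$ to $T=H(1)$, and this is the heart of the matter. The key computational input is the family of exact intertwining relations furnished by the isometries $T_{e^{i\eta x}}$ with $\eta=-\ln w\ge 0$: since $e^{i\eta x}\in H^{\infty}$ one has $T_{a}T_{e^{i\eta x}}=T_{ae^{i\eta x}}$ for every symbol $a$, and on the Fourier side $D_{\vartheta}T_{e^{i\eta x}}=T_{e^{i\eta x}}D_{\vartheta^{w}}$ (and likewise for the $\nu_{j}$), so that $T\,T_{e^{i\eta x}}=T_{e^{i\eta x}}H(w)+K_{w}$, where $K_{w}=\sum[T_{\varphi_{j}},T_{e^{i\eta x}}]D_{\vartheta_{j}^{w}}+\sum D_{\nu_{j}}[T_{\psi_{j}},T_{e^{i\eta x}}]$ is compact by Douglas' theorem. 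Together with the norm-convergence $H(w)\to T_{\varphi}$ as $w\to 0$ (which follows from $\|D_{\vartheta_{j}^{w}}-\lambda_{j}I\|=\sup_{s\ge-\ln w}|\vartheta_{j}(s)-\lambda_{j}|\to 0$), openness of the invertible group already gives a $w_{0}>0$ with $H(w)$ invertible on $[0,w_{0}]$. The main obstacle is then to show that the set $\{w\in[0,1]:H(w)\text{ is invertible}\}$, which is open and nonempty, is also closed; equivalently, that no kernel can appear along the path. I would attack this by combining the intertwining relations above with the invertibility of $T_{\varphi}$ and the density of $\bigcup_{\eta>0}e^{i\eta x}H^{2}$ in $H^{2}$ to prove that $\dim\ker H(w)$ is constant in $w\in(0,1]$ (hence equal to its value $0$ near $w=0$); since $ind(H(w))=0$ forces $\dim\ker H(w)=\dim\ker H(w)^{*}$, this yields invertibility of $H(w)$ for every $w$, and in particular of $T=H(1)$. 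Controlling the compact error terms $K_{w}$ while propagating triviality of the kernel along the homotopy is the step I expect to be the most delicate.
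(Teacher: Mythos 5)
The first half of your proposal --- Fredholmness of every $H(w)$ via the symbol calculus (containment of essential spectra), constancy of the index along the homotopy, invertibility of $T_{\varphi}=H(0)$ by the classical Toeplitz criterion, and openness of the invertible group giving an initial interval of invertibility --- coincides with the paper's argument and is correct, as is your exact intertwining relation $D_{\vartheta}T_{e^{i\eta x}}=T_{e^{i\eta x}}D_{\vartheta^{w}}$. The gap is that you stop exactly where the proof begins: the closedness step is not an argument but an announcement (``I would attack this by\dots'', ``the step I expect to be the most delicate''). Moreover, the route you announce --- showing $\dim\ker H(w)$ is constant on $(0,1]$ --- cannot be a soft consequence of norm continuity, index stability and compactness of the error terms: for a general norm-continuous family of index-zero Fredholm operators the kernel dimension can jump upward at a limit point (consider $\mathrm{diag}(w,1,1,\dots)$ on $\ell^{2}$ as $w\to 0$), so any proof must exploit the specific shift structure, and you never say how.

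What the paper actually does at the first problematic point $w_{0}$ (assumed non-invertible) is this: from $H(w)=T_{e^{-i\eta x}}H(w_{0})T_{e^{i\eta x}}+K(w,w_{0})$ with $\eta=\ln(w_{0})-\ln(w)$, one shows that $\ker\bigl(T_{e^{-i\eta x}}H(w_{0})T_{e^{i\eta x}}\bigr)\neq\{0\}$ for some $\eta>0$, since otherwise $H(w_{0})$ would map each $e^{i\eta x}H^{2}$ onto itself injectively, hence (closed range, dense union $\bigcup_{\eta>0}e^{i\eta x}H^{2}$, index zero) be invertible. This leaves the dichotomy $\ker(H(w_{0}))\cap e^{i\eta x}H^{2}\neq\{0\}$ or $H(w_{0})(e^{i\eta x}H^{2})\cap\ker(T_{e^{-i\eta x}})\neq\{0\}$. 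To exclude the second alternative and then contradict the first, the paper needs the invariance $H(w_{0})(e^{i\eta x}H^{2})\subseteq e^{i\eta x}H^{2}$, which holds only when the Toeplitz symbols lie in $H^{\infty}$; it then uses finite-dimensionality of $\ker(H(w_{0}))$ to pick the maximal $\eta_{0}$ with $\ker(H(w_{0}))\cap e^{i\eta_{0}x}H^{2}\neq\{0\}$ and shows $T_{e^{-i\eta_{0}x}}H(w_{0})T_{e^{i\eta_{0}x}}$ is simultaneously non-invertible and invertible. For symbols that are not analytic this invariance fails, so the paper runs a second, parallel argument for continuous symbols (approximation by trigonometric polynomials and factoring out $T_{z^{-m}}$ so that the remaining analytic part again preserves $e^{i\eta x}H^{2}$), and finally combines the two cases through $QC\subset H^{\infty}+C$. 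None of these ideas --- the maximal-$\eta_{0}$ trick, the invariance requirement, the $H^{\infty}$/continuous case split --- appears in your sketch, and they constitute the actual content of the theorem.
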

\begin{proof}
($\Rightarrow$): trivial.\\
($\Leftarrow$): Let $T$ be Fredholm and $ind(T)=0$. Let
$H:[0,1]\rightarrow\Psi$ be the homotopy constructed above. Since
$\sigma_{e}(T)=\{\sum\hat{\varphi_{j}}(x)\vartheta_{j}(t)+\sum\hat{\psi_{j}}(x)\nu_{j}(t):x\in
M_{\infty}(QC),t\in
[0,\infty]\}\cup\{\sum\lambda_{j}\hat{\varphi_{j}}(x)+\sum\mu_{j}\hat{\psi_{j}}(x):x\in
M(QC)\}$ and
$\sigma_{e}(H(w))=\{\sum\hat{\varphi_{j}}(x)\vartheta_{j}(t)+\sum\hat{\psi_{j}}(x)\nu_{j}(t):x\in
M_{\infty}(QC),t\in [-\ln
w,\infty]\}\cup\{\sum\lambda_{j}\hat{\varphi_{j}}(x)+\sum\mu_{j}\hat{\psi_{j}}(x):x\in
M(QC)\}$, we have $\sigma_{e}(H(w))\subseteq\sigma_{e}(T)$ for all
$w\in [0,1]$. Hence if $T$ is Fredholm then
$0\not\in\sigma_{e}(T)$ $\Rightarrow$ $0\not\in\sigma_{e}(H(w))$
for all $w\in [0,1]$ which implies that $H(w)$ is Fredholm for all
$w\in [0,1]$. Since $H$ is continuous and $ind$ is continuous on
the set of Fredholm operators, we have
$$ind(H(0))=ind(H(w))=ind(H(1))=ind(T)=0\quad\forall w\in [0,1].$$
Hence $ind(H(0)=ind(T_{\varphi})=0$ which implies that
$T_{\varphi}$ is invertible since any Toeplitz operator with
$L^{\infty}$ symbol is invertible iff it is Fredholm with Fredholm
index zero. Since invertible elements in $\Psi$ form an open
subset and $H$ is continuous, there is a $w_{0}\in (0,1]$ such
that $H(w)$ is invertible for all $0\leq w<w_{0}$. Now suppose
that $H(w_{0})$ is not invertible.

For $\eta=\ln(w_{0})-\ln(w)$, where $w_{0}>w$, we have
$M_{\vartheta_{j}^{w}}=S_{\eta}^{\ast}M_{\vartheta_{j}^{w_{0}}}S_{\eta}$
where $M_{g}:L^{2}((0,\infty))\rightarrow L^{2}((0,\infty))$,
$M_{g}f(t):=g(t)f(t)$ is the multiplication operator. Hence we
have
$$D_{\vartheta_{j}^{w}}=(\mathcal{F}^{-1}S_{\eta}^{\ast}\mathcal{F})D_{\vartheta_{j}^{w_{0}}}(\mathcal{F}^{-1}S_{\eta}\mathcal{F})=T_{e^{-i\eta x}}D_{\vartheta_{j}^{w_{0}}}T_{e^{i\eta x}}.$$
 Since $\psi_{j}$, $\varphi_{j}\in QC$ we have
$T_{\varphi_{j}}T_{e^{-i\eta x}}-T_{e^{-i\eta
x}}T_{\varphi_{j}}\in K(H^{2})$ and $T_{\psi_{j}}T_{e^{i\eta
x}}-T_{e^{i\eta x}}T_{\psi_{j}}\in K(H^{2})$ for all $\eta>0$(see
\cite{Douglas}). Hence we have
\begin{eqnarray*}
& &H(w)=\sum T_{\varphi_{j}}D_{\vartheta_{j}^{w}}+\sum
D_{\nu_{j}^{w}}T_{\psi_{j}}=\\
& &\sum T_{\varphi_{j}}T_{e^{-i\eta
x}}D_{\vartheta_{j}^{w_{0}}}T_{e^{i\eta x}}+\sum T_{e^{-i\eta
x}}D_{\nu_{j}^{w_{0}}}T_{e^{i\eta
x}}T_{\psi_{j}} \\
& &=T_{e^{-i\eta x}}(\sum
T_{\varphi_{j}}D_{\vartheta_{j}^{w_{0}}}+\sum
D_{\nu_{j}^{w_{0}}}T_{\psi_{j}})T_{e^{i\eta
x}}+K(w,w_{0})\\
& &=T_{e^{-i\eta x}}H(w_{0})T_{e^{i\eta x}}+K(w,w_{0}).
\end{eqnarray*}
where $K(w,w_{0})\in K(H^{2})$ is a compact operator.

 Hence we have
 $$H(w)=T_{e^{-i\eta x}}H(w_{0})T_{e^{i\eta x}}+K(w,w_{0})$$
 for some compact operator $K(w,w_{0})\in K(H^{2})$. Since Fredholm index is stable under compact
perturbations this implies that the operator $T_{e^{-i\eta
x}}H(w_{0})T_{e^{i\eta x}}$ is Fredholm with index $0$ for all
$\eta>0$. Hence if $H(w_{0})$ is non-invertible then
$\ker(T_{e^{-i\eta x}}H(w_{0})T_{e^{i\eta x}})\neq\{0\}$ for some
$\eta>0$ since $T_{e^{-i\eta x}}H(w_{0})T_{e^{i\eta x}}$ is
Fredholm with index $0$. If this is not the case i.e. if we have
$\ker(T_{e^{-i\eta x}}H(w_{0})T_{e^{i\eta x}})=\{0\}$ for all
$\eta>0$, since $T_{e^{-i\eta x}}H(w_{0})T_{e^{i\eta x}}$ is
Fredholm with index $0$, $T_{e^{-i\eta x}}H(w_{0})T_{e^{i\eta x}}$
is invertible for all $\eta>0$. And this implies that $H(w_{0})$
maps each $e^{i\eta x}H^{2}$ onto itself in a one to one manner.
Hence $H(w_{0})$ maps $\cup_{\eta>0}e^{i\eta x}H^{2}$ onto itself
in a one to one manner. Since $H(w_{0})$ is Fredholm,
$ran(H(w_{0}))\subseteq H^{2}$ is closed in $H^{2}$ and we have
$\cup_{\eta>0}e^{i\eta x}H^{2}\subseteq ran(H(w_{0}))$. Since
$\cup_{\eta>0}e^{i\eta x}H^{2}$ is dense in $H^{2}$ and
$ind(H(w_{0}))=0$, this implies that $ran(H(w_{0}))=H^{2}$ which
in turn implies that $\ker(H(w_{0}))=\{0\}$ and hence $H(w_{0})$
is invertible which is a contradiction. Hence we should have
$\ker(T_{e^{-i\eta x}}H(w_{0})T_{e^{i\eta x}})\neq\{0\}$ for some
$\eta>0$. This implies that either $\ker(H(w_{0}))\cap(e^{i\eta
x}H^{2})\neq\{0\}$ or $H(w_{0})(e^{i\eta
x}H^{2})\cap\ker(T_{e^{-i\eta x}})\neq\{0\}$ should hold for some
$\eta>0$. Now suppose that $\psi_{j}$, $\varphi_{j}\in H^{\infty}$
for all $j\in\mathbb{N}$. Since  $\psi_{j}$, $\varphi_{j}\in
H^{\infty}$ for all $j$, we have $H(w_{0})(e^{i\eta
x}H^{2})\subseteq(e^{i\eta x}H^{2})$ for all $\eta>0$ and
$\ker(T_{e^{-i\eta x}})=(e^{i\eta x}H^{2})^{\perp}$, we have
$H(w_{0})(e^{i\eta x}H^{2})\cap\ker(T_{e^{-i\eta x}})=\{0\}$ for
all $\eta>0$. So if $H(w_{0})$ is non-invertible we should have
$\ker(H(w_{0}))\cap(e^{i\eta x}H^{2})\neq\{0\}$. Since $H(w_{0})$
is Fredholm, $\ker(H(w_{0}))$ is finite dimensional an
$\eta_{0}:=\max\{\eta>0:\ker(H(w_{0}))\cap(e^{i\eta
x}H^{2})\neq\{0\}\}$ exists. For such $\eta_{0}>0$, let
$H_{0}:=T_{e^{-i\eta_{0}x}}H(w_{0})T_{e^{i\eta_{0}x}}$. Then since
$\ker(H(w_{0}))\cap(e^{i\eta_{0} x}H^{2})\neq\{0\}$, $H_{0}$ is
non-invertible. But since
$\ker(H(w_{0}))\cap(e^{i(\eta_{0}+\delta)x}H^{2})=\{0\}$ for all
$\delta>0$ and $H(w_{0})(e^{i(\eta_{0}+\delta)x}H^{2})\subseteq
e^{i(\eta_{0}+\delta)x}H^{2}$ we have
$\ker(T_{e^{-i(\eta_{0}+\delta)x}}H(w_{0})T_{e^{i(\eta_{0}+\delta)x}})=\ker(T_{e^{-i\delta
x}}H_{0}T_{e^{i\delta x}})=\{0\}$ and this implies that
$T_{e^{-i\delta x}}H_{0}T_{e^{i\delta x}}$ is invertible for all
$\delta>0$. This again implies that $H_{0}$ maps $e^{i\delta
x}H^{2}$ onto itself in a one to one manner which implies that
$H_{0}$ should be invertible. This contradicts our assumption that
$H(w_{0})$ is non-invertible. Hence $H(w_{0})$ should be
invertible. Therefore, in this case(where $\psi_{j}$,
$\varphi_{j}\in H^{\infty}$ for all $j$), if $H(w)$ is invertible
for all $0\leq w<w_{0}$ then $H(w_{0})$ is also invertible. So by
transfinite induction $H(w)$ is invertible for all $w\in [0,1]$
and in particular $H(1)=T$ is invertible.

Now suppose that $\varphi_{j}$ and $\psi_{j}$ are continuous for
all $j$. Since trigonometric polynomials are dense in continuous
functions, it is enough to prove the claim when
$\varphi_{j}(z):=\sum_{k=-m}^{m}a_{k}z^{k}$ and $\psi_{j}(z)=\sum_{k=m}^{m}b_{k}z^{k}$ are
a trigonometric polynomials for all $j$. One can write
$\varphi_{j}$ and $\psi_{j}$ in the form
$$\varphi_{j}(z)=z^{-m}\sum_{k=0}^{m'}a_{j}z^{k}=z^{-m}q_{j}(z),\quad \psi_{j}(z)=z^{-m}p_{j}(z)$$
where $z=(\frac{x-i}{x+i})$ and $q_{j}$ and $p_{j}$ are analytic polynomials.
In this case we have
\begin{eqnarray*}
& &H(w)=T_{e^{-i\eta x}}H(w_{0})T_{e^{i\eta x}}+K(w,w_{0})\\
& &=T_{e^{-i\eta x}}(\sum
T_{\varphi_{j}}D_{\vartheta_{j}^{w_{0}}}+\sum D_{\nu_{j}^{w_{0}}}T_{\psi_{j}})T_{e^{i\eta
x}}+K(w,w_{0})\\
& &=T_{e^{-i\eta x}}(\sum
T_{z^{-m}}T_{q_{j}}D_{\vartheta_{j}^{w_{0}}}+\sum D_{\nu_{j}^{w_{0}}}T_{z^{-m}}T_{p{j}})T_{e^{i\eta
x}}+K(w,w_{0})\\
& &=T_{z^{-m}}T_{e^{-i\eta x}}(\sum
T_{q_{j}}D_{\vartheta_{j}^{w_{0}}}+\sum D_{\nu_{j}^{w_{0}}}T_{p_{j}})T_{e^{i\eta x}}+K_{0}+K(w,w_{0})
\end{eqnarray*}
for some $K_{0}\in K(H^{2})$
since $D_{\nu_{j}}T_{z^{-m}}-T_{z^{-m}}D_{\nu_{j}}\in K(H^{2})\quad\forall j$, $T_{z^{-m}}T_{e^{-i\eta x}}=T_{e^{-i\eta x}}T_{z^{-m}}$ and
the sum is finite. Let $\tilde{H}(w_{0})=\sum T_{q_{j}}D_{\vartheta_{j}^{w_{0}}}+\sum D_{\nu_{j}^{w_{0}}}T_{p_{j}}$. Then we have
 $$H(w)=T_{e^{-i\eta x}}T_{z^{-m}}\tilde{H}(w_{0})T_{e^{i\eta x}}+\tilde{K}(w,w_{0})$$ 
 where $\tilde{K}(w,w_{0})=K_{0}+K(w,w_{0})\in K(H^{2})$. Since $T_{z^{-m}}T_{e^{-i\eta x}}=T_{e^{-i\eta x}}T_{z^{-m}}$ we have $\ker(\tilde{H}(w_{0}))\cap e^{i\eta x}H^{2}\neq\{0\}$ for some $\eta>0$. If this is not the case i.e. if $\ker(\tilde{H}(w_{0}))\cap e^{i\eta x}H^{2}=\{0\}\quad\forall\eta>0$ then $H(w)-\tilde{K}(w,w_{0})=T_{z^{-m}}T_{e^{-i\eta x}}\tilde{H}(w_{0})T_{e^{i\eta x}}$ is invertible for all $\eta>0$. And this implies that $\tilde{H}(w_{0})(e^{i\eta x}H^{2})=z^{m}(e^{i\eta x}H^{2})\quad\forall\eta>0$. Since $ran(\tilde{H}(w_{0})\subseteq H^{2}$ is closed, this implies that $ran(\tilde{H}(w_{0}))=z^{m}H^{2}$ and this in turn implies that $H(w_{0})=T_{z^{-m}}\tilde{H}(w_{0})$ is invertible which contradicts our assumption. Since $\ker(\tilde{H}(w_{0})$ is finite dimensional, an $\eta_{0}:=\max\{\eta>0:\ker(\tilde{H}(w_{0}))\cap(e^{i\eta x}H^{2})\neq\{0\}\}$ exists. Now let $\tilde{H}_{0}:=T_{e^{-i\eta_{0}x}}\tilde{H}(w_{0})T_{e^{i\eta_{0}x}}$, then $\ker(\tilde{H}_{0})\neq\{0\}$. Since $\ker(\tilde{H}(w_{0}))\cap(e^{i(\eta_{0}+\delta)x}H^{2})=\{0\}\quad\forall\delta>0$ we have 
 $$T_{z^{-m}}T_{e^{-i\delta x}}\tilde{H}_{0}T_{e^{i\delta x}}=T_{e^{-i\delta x}}T_{z^{-m}}\tilde{H}_{0}T_{e^{i\delta x}}$$ 
 is invertible for all $\delta>0$. This implies that $\tilde{H}_{0}(e^{i\delta x}H^{2})=z^{m}(e^{i\delta x}H^{2})$ for all $\delta>0$. Since $ran(\tilde{H}_{0})\subseteq H^{2}$ is closed, this implies that $ran(H_{0})=z^{m}H^{2}$. Since $ind(T_{z^{-m}}\tilde{H}_{0})=0$ this implies that $ind(\tilde{H}_{0})=-m$ which in turn implies that $\ker(\tilde{H}_{0})=\{0\}$. This contradiction implies that $H(w_{0})$ is
invertible.

Hence if $\psi_{j}$, $\varphi_{j}\in H^{\infty}$ or $\psi_{j}$, $\varphi_{j}$ are
continuous for all $j$ then $T=\sum
T_{\varphi_{j}}D_{\vartheta_{j}}+\sum D_{\nu_{j}}T_{\psi_{j}}$ is Fredholm with index zero
implies that $T$ is invertible. So if $\psi_{j}$, $\varphi_{j}\in
(H^{\infty}+C)\cap QC=QC$ for all $j$ then $T=\sum
T_{\varphi_{j}}D_{\vartheta_{j}}+\sum D_{\nu_{j}}T_{\psi_{j}}$ is Fredholm with index zero
implies that $T$ is invertible.
\end{proof}
A reinterpretation of this theorem would be relating the
invertibility of a generic element in $\Psi$ to the invertibility
of a related Toeplitz operator which is the corollary below:
\begin{corollary}
For any $T=\sum T_{\varphi_{j}}D_{\vartheta_{j}}+\sum D_{\nu_{j}}T_{\psi_{j}}\in\Psi$ such that
$T$ is Fredholm $\psi_{j}$ $\varphi_{j}\in QC$ and $\nu_{j}$, $\vartheta_{j}\in
C([0,\infty])$ $\forall j\in\mathbb{N}$, $T$ is invertible if and
only if $T_{\varphi}\in\Psi$ is invertible where
$\varphi=\sum\lambda_{j}\varphi_{j}+\sum\mu_{j}\psi_{j}$ and $\mu_{j}=\lim_{t\rightarrow\infty}\nu_{j}(t)$,
$\lambda_{j}=\lim_{t\rightarrow\infty}\vartheta_{j}(t)$.
\end{corollary}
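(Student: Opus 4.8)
The plan is to read the corollary directly off the homotopy machinery already assembled in the proof of the main theorem, together with the classical Toeplitz invertibility criterion. Recall that the homotopy $H:[0,1]\to\Psi$ interpolates between $H(1)=T$ and $H(0)=T_{\varphi}$, where $\varphi=\sum\lambda_{j}\varphi_{j}+\sum\mu_{j}\psi_{j}$ is precisely the symbol appearing in the statement. The essential-spectrum computation carried out in the proof of the theorem shows that $\sigma_{e}(H(w))\subseteq\sigma_{e}(T)$ for every $w\in[0,1]$. Consequently, if $T$ is Fredholm, so that $0\notin\sigma_{e}(T)$, then $0\notin\sigma_{e}(H(w))$ for all $w$, and hence every $H(w)$ is Fredholm; in particular $T_{\varphi}=H(0)$ is Fredholm.

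Next I would invoke homotopy invariance of the Fredholm index. Since $H$ is continuous in operator norm, takes values in the Fredholm operators throughout $[0,1]$, and $ind:\mathrm{F}\to\mathbb{Z}$ is continuous, the index is constant along the path. Therefore
$$ind(T)=ind(H(1))=ind(H(0))=ind(T_{\varphi}).$$
Note that, unlike in the theorem, this step does not presuppose the value of the index to be zero; the invariance holds as soon as $T$ is Fredholm. This index identity is the one quantitative input the corollary needs.

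Finally I would combine two invertibility criteria. By the main theorem, since $T$ is Fredholm, $T$ is invertible if and only if $ind(T)=0$. On the other hand, $T_{\varphi}$ is a Toeplitz operator with $L^{\infty}$ symbol, so by the classical criterion recalled in the preliminaries it is invertible if and only if it is Fredholm with index zero; as established above $T_{\varphi}$ is Fredholm, so $T_{\varphi}$ is invertible if and only if $ind(T_{\varphi})=0$. Chaining these facts with the index identity yields
$$T\ \text{invertible}\iff ind(T)=0\iff ind(T_{\varphi})=0\iff T_{\varphi}\ \text{invertible},$$
which is exactly the assertion of the corollary.

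There is no genuine obstacle here: the corollary is a repackaging of the theorem, and the only point that requires care is ensuring that $T_{\varphi}$ is itself Fredholm, so that the Toeplitz criterion may be applied to it; this falls out of the same essential-spectrum inclusion used in the theorem. One might additionally remark that the result is a clean reformulation, reducing the invertibility of an a priori complicated element of $\Psi$ to that of a single associated Toeplitz operator, whose symbol is obtained by replacing each Fourier-multiplier factor $D_{\vartheta_{j}}$, $D_{\nu_{j}}$ by its limit $\lambda_{j}$, $\mu_{j}$ at infinity.
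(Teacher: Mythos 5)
Your proof is correct and follows essentially the same route as the paper: the homotopy $H$, the essential-spectrum inclusion $\sigma_{e}(H(w))\subseteq\sigma_{e}(T)$, continuity of the index, and the classical Toeplitz invertibility criterion. The only difference is in the reverse direction, where the paper re-invokes the \emph{proof} of the main theorem to propagate invertibility from $H(0)=T_{\varphi}$ to $H(1)=T$, whereas you use only its \emph{statement} via the index identity $ind(T)=ind(T_{\varphi})$ --- a slightly cleaner deduction that makes the corollary a consequence of the theorem itself rather than of its proof.
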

\begin{proof}
If $T$ is invertible then $T$ is Fredholm with $ind(T)=0$. Using
the homotopy $H:[0,1]\rightarrow\Psi$ constructed in the beginning
of this section, since $ind$ is continuous we have $H(w)$ is
Fredholm $\forall w\in [0,1]$ and $ind(H(w))=ind(T)=0$ $\forall
w\in [0,1]$. In particular $H(0)=T_{\varphi}$ is Fredholm and
$ind(T_{\varphi})=0$, since any Fredholm Toeplitz operator
$T_{\varphi}$ with $ind(T_{\varphi})=0$ is invertible,
$T_{\varphi}$ is invertible.

On the other hand if $T_{\varphi}$ is invertible, since $T$ is
Fredholm, by the proof of Theorem 3, $H(w)$ is invertible $\forall
w\in [0,1]$, in particular $H(1)=T$ is invertible.
\end{proof}

\section{Applications of the Main Results}

An immediate application of Corollary 4 shows that the essential
spectrum and the spectrum of a quasi-parabolic composition
operator coincide:
\begin{theorem}
Let $\varphi:\mathbb{D}\rightarrow\mathbb{D}$ and
$\tilde{\varphi}:\mathbb{H}\rightarrow\mathbb{H}$ be such that
$\varphi(z)=\frac{2iz+\eta(z)(1-z)}{2i+\eta(z)(1-z)}$ and
$\tilde{\varphi}(w)=w+\psi(w)$ where $\eta\in QC(\mathbb{T})\cap
H^{\infty}$, $\Im(\eta(z))>\delta>0$ for all $z\in\mathbb{D}$,
$\psi\in QC(\mathbb{R})\cap H^{\infty}$, $\Im(\eta(w))>\delta>0$
for all $w\in\mathbb{H}$. Then
$C_{\tilde{\varphi}}:H^{2}(\mathbb{H})\rightarrow
H^{2}(\mathbb{H})$ is bounded and
$\sigma_{e}(C_{\tilde{\varphi}})=\sigma(C_{\tilde{\varphi}})$. We
also have $\sigma_{e}(C_{\varphi})=\sigma(C_{\varphi})$.
\end{theorem}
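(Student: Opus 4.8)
The plan is to reduce the spectral identity to the invertibility criterion of Corollary 4 by exhibiting $C_{\tilde{\varphi}}$ as a norm-convergent element of $\Psi$ of the prescribed form whose associated Toeplitz symbol vanishes. First I would dispose of boundedness: since $\tilde{\varphi}(w)=w+\psi(w)$ is exactly the case $p=1$ of the self-maps $\varphi(z)=pz+\psi(z)$ treated in the preliminaries, the operator $C_{\tilde{\varphi}}$ is bounded on $H^{2}(\mathbb{H})$. Next, the inclusion $\sigma_{e}(C_{\tilde{\varphi}})\subseteq\sigma(C_{\tilde{\varphi}})$ is automatic (invertibility forces Fredholmness), so the entire content is the reverse inclusion $\sigma(C_{\tilde{\varphi}})\subseteq\sigma_{e}(C_{\tilde{\varphi}})$. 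For the disc statement I would use the preliminaries' identity $\Phi C_{\varphi}\Phi^{-1}=T_{\tau}C_{\tilde{\varphi}}$ with $\tau(w)=\frac{\tilde{\varphi}(w)+i}{w+i}$; since $\Phi$ preserves both spectrum and essential spectrum, it is enough to prove the equality for $C_{\tilde{\varphi}}$ and for $T_{\tau}C_{\tilde{\varphi}}$ on $H^{2}(\mathbb{H})$.

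The key structural input, which I would quote from \cite{Gul1}, is that $C_{\tilde{\varphi}}$ belongs to $\Psi$ and can be written as a norm-convergent sum $\sum T_{\varphi_{j}}D_{\vartheta_{j}}+\sum D_{\nu_{j}}T_{\psi_{j}}$ in which every Fourier-multiplier symbol has limit zero at infinity; this vanishing is forced by $\Im(\psi)>\delta>0$. Consequently the associated Toeplitz symbol $\varphi=\sum\lambda_{j}\varphi_{j}+\sum\mu_{j}\psi_{j}$ arising from the homotopy's endpoint $H(0)$ is identically $0$. I would then extract the two facts I need from this representation. Evaluating the Gelfand transform of $[C_{\tilde{\varphi}}]$ on the slice $M(QC)\times\{\infty\}$ of $\mathbb{M}$ gives $\sum\hat{\varphi_{j}}(x)\,\vartheta_{j}(\infty)+\sum\nu_{j}(\infty)\,\hat{\psi_{j}}(x)=0$, so $0\in\sigma_{e}(C_{\tilde{\varphi}})$. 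Moreover, multiplying through by $T_{\tau}$ and using Douglas' Theorem together with the compactness of $[T_{\varphi},D_{\vartheta}]$ shows $T_{\tau}C_{\tilde{\varphi}}=\sum T_{\tau\varphi_{j}}D_{\vartheta_{j}}+\sum D_{\nu_{j}}T_{\tau\psi_{j}}$ modulo $K(H^{2})$, whose associated Toeplitz symbol is $\tau\cdot 0=0$ as well, so $0\in\sigma_{e}(T_{\tau}C_{\tilde{\varphi}})$ too.

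With these in hand the spectral equality follows from Corollary 4. Given $\lambda\notin\sigma_{e}(C_{\tilde{\varphi}})$, the operator $C_{\tilde{\varphi}}-\lambda$ is Fredholm, and since $0\in\sigma_{e}(C_{\tilde{\varphi}})$ we must have $\lambda\neq 0$. The associated Toeplitz symbol of $C_{\tilde{\varphi}}-\lambda$ is $0-\lambda=-\lambda$, so the corresponding Toeplitz operator is $-\lambda I$, which is invertible because $\lambda\neq 0$. Corollary 4 then yields that $C_{\tilde{\varphi}}-\lambda$ is invertible, i.e. $\lambda\notin\sigma(C_{\tilde{\varphi}})$; hence $\sigma(C_{\tilde{\varphi}})\subseteq\sigma_{e}(C_{\tilde{\varphi}})$ and the two sets coincide. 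Running the identical argument for $T_{\tau}C_{\tilde{\varphi}}$ and transferring back through $\Phi$ gives $\sigma(C_{\varphi})=\sigma_{e}(C_{\varphi})$.

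The hard part is not this final deduction but the structural representation being quoted: showing that the quasi-parabolic $C_{\tilde{\varphi}}$ actually lies in $\Psi$, is expressible as the required norm-convergent combination of Toeplitz operators and Fourier multipliers, and that all of its multiplier symbols decay to $0$ at infinity. This is the analytic heart established in \cite{Gul1} through an asymptotic analysis of $g\mapsto g(w+\psi(w))$, and it is exactly what licenses the application of Corollary 4. The only genuinely new bookkeeping here is verifying that left-multiplication by $T_{\tau}$ preserves the vanishing of the associated Toeplitz symbol, which is where Douglas' Theorem and the compactness of the commutators $[T_{\varphi},D_{\vartheta}]$ enter.
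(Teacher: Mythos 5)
Your argument for the half-plane operator $C_{\tilde{\varphi}}$ is correct and is essentially the paper's own proof: quote from \cite{Gul1} the norm-convergent representation $C_{\tilde{\varphi}}=\sum_{n=0}^{\infty}T_{\tau^{n}}D_{\vartheta_{n}}$ with $\tau=i\alpha-\psi$ and $\vartheta_{n}(t)=\frac{(-it)^{n}e^{-\alpha t}}{n!}$ (so every multiplier symbol vanishes at infinity), observe that $\lambda-C_{\tilde{\varphi}}$ is then \emph{exactly} of the form covered by Corollary 4 with associated Toeplitz symbol the constant $\lambda$, and conclude invertibility for every $\lambda\notin\sigma_{e}(C_{\tilde{\varphi}})$. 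Your explicit verification that $0\in\sigma_{e}(C_{\tilde{\varphi}})$, by evaluating the Gelfand transform on the slice $M(QC)\times\{\infty\}$, is a welcome detail: the paper only asserts ``$\lambda\neq 0$'' and leaves this justification implicit.

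The disc case, however, contains a genuine gap as you have written it. You establish that $T_{\tau}C_{\tilde{\varphi}}$ equals $\sum T_{\tau\varphi_{j}}D_{\vartheta_{j}}+\sum D_{\nu_{j}}T_{\tau\psi_{j}}$ only \emph{modulo} $K(H^{2})$ (Douglas' Theorem and the commutators $[T_{\varphi},D_{\vartheta}]$ produce compact errors), and then you propose to ``run the identical argument.'' But Corollary 4, and Theorem 3 behind it, apply only to operators that are exactly such a sum: invertibility is not a property of the Calkin coset, and the criterion genuinely fails under compact perturbation. Concretely, if $P$ is a rank-one orthogonal projection, then $I-P$ is a compact perturbation of $I=T_{1}D_{1}$, is Fredholm of index $0$, and the associated Toeplitz symbol of $T_{1}D_{1}$ is the constant $1$, whose Toeplitz operator is invertible---yet $I-P$ is not invertible. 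So deducing invertibility of $\lambda-T_{\tau}C_{\tilde{\varphi}}$ by applying Corollary 4 to an operator that agrees with it only up to a compact is unjustified. (Your use of the modulo-compact identity to conclude $0\in\sigma_{e}(T_{\tau}C_{\tilde{\varphi}})$ is fine, since the essential spectrum ignores compacts; it is only the final invertibility step that breaks.) The paper avoids this issue because in its representation all symbols are analytic: the Toeplitz symbols are $\tilde{\tau}^{\,n}\in H^{\infty}$, there are no $D_{\nu}T_{\psi}$ terms, and the prefactor $\frac{z+i+\eta\circ\mathfrak{C}(z)}{z+i}$ lies in $QC\cap H^{\infty}$, so $T_{a}T_{b}=T_{ab}$ holds exactly for these symbols and $\Phi C_{\varphi}\Phi^{-1}$ is exactly a norm-convergent sum of the required form, to which Corollary 4 applies directly. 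A minor further point: in the theorem's notation $\tilde{\varphi}$ (built from $\psi$) is \emph{not} the half-plane incarnation of the disc map $\varphi$ (built from $\eta$); the disc case requires $\Phi C_{\varphi}\Phi^{-1}=T_{\sigma}C_{\varphi_{0}}$ with $\varphi_{0}(w)=w+\eta\circ\mathfrak{C}(w)$, not the $\tilde{\varphi}$ of the statement. This is cosmetic, but your write-up conflates the two.
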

\begin{proof}
The boundedness of $C_{\tilde{\varphi}}$ was shown in \cite{Gul1}.
In particular we have
$$C_{\tilde{\varphi}}=\sum_{n=0}^{\infty}T_{\tau^{n}}D_{\vartheta_{n}}$$
where $\tau(x)=i\alpha-\psi(x)$ and
$\vartheta_{n}(t)=\frac{(-it)^{n}e^{-\alpha t}}{n!}$ for some
$\alpha>0$. So for any
$\lambda\not\in\sigma_{e}(C_{\tilde{\varphi}})$ we have
$$\lambda-C_{\tilde{\varphi}}=\lambda-\sum_{n=0}^{\infty}T_{\tau^{n}}D_{\vartheta_{n}}$$
where the series on the Right Hand Side converges in the operator
norm. Since
$\lambda_{n}:=\lim_{t\rightarrow\infty}\vartheta_{n}(t)=0$, by
Corollary 1 we have $\lambda-C_{\tilde{\varphi}}$ is invertible if
and only if $\lambda$ is invertible which is certainly the case
since $\lambda\neq 0$. Hence
$\lambda\not\in\sigma(C_{\tilde{\varphi}})$ $\Rightarrow$
$\sigma(C_{\tilde{\varphi}})\subseteq\sigma_{e}(C_{\tilde{\varphi}})$
$\Rightarrow$
$\sigma(C_{\tilde{\varphi}})=\sigma_{e}(C_{\tilde{\varphi}})$. The
same argument applies to $C_{\varphi}$ since
$$\Phi\circ C_{\varphi}\circ\Phi^{-1}=T_{\frac{z+i+\eta\circ\mathfrak{C}(z)}{z+i}}\sum_{n=0}^{\infty}T_{\tilde{\tau}^{n}}D_{\vartheta_{n}}$$
where $\Phi:H^{2}(\mathbb{D})\rightarrow H^{2}(\mathbb{H})$ is the
isometric isomorphism
$$\Phi(f)(z)=\bigg(\frac{1}{\sqrt{\pi}(z+i)}\bigg)f\bigg(\frac{z-i}{z+i}\bigg),$$
$\mathfrak{C}$ is the Cayley transform and
$\tilde{\tau}(x)=i\alpha-\eta\circ\mathfrak{C}(x)$. Hence we also
have $\sigma(C_{\varphi})=\sigma_{e}(C_{\varphi})$.
\end{proof}


\begin{thebibliography}{20}

\bibitem{CoMac}
        {\sc C.C. Cowen, B.D. MacCluer}
        {\it Composition Operators on Spaces of Analytic Functions},
        CRC Press, 1995.

\bibitem{Douglas}
        {\sc R.G. Douglas},
        {\it Banach algebra techniques in Operator Theory},
        GTM Springer, 1998.

\bibitem{Gul1}
        {\sc U. G\"{u}l},
        {\it Essential spectra of quasi-parabolic composition operators on Hardy spaces of analytic functions},
         J. Math. Anal. Appl.  377,  (2011), 2, 771--791

\bibitem{Matache}
        {\sc V. Matache},
        {\it Composition operators on Hardy spaces of a half-plane},
        Proc. Amer. Math. Soc. 127, (1999), no. 5, 1483--1491.

\bibitem{Murphy}
        {\sc G.J. Murphy},
        {\it $C^*$-algebras and Operator Theory},
        Academic Press, New York-London, 1990.

\bibitem{Rudin}
        {\sc W. Rudin},
        {\it Functional Analysis},
        McGraw Hill Inc., 1973.

\bibitem{Sarason}
        {\sc D. Sarason},
        {\it Function Theory on the Unit Circle},
        Lecture Notes for a Conference at Virginia Polytechnic and State
        University, Blacksburg, Virginia, 1978.

\end{thebibliography}
\end{document}